\renewcommand{\phi}{\varphi}
\renewcommand{\epsilon}{\varepsilon}
\newcommand{\PP}{\mathbb{P}}
\newcommand{\gl}{\lambda}
\renewcommand{\models}{\vDash}
\renewcommand{\iff}{\Leftrightarrow}
\renewcommand\footnotesize{%
	\@setfontsize\footnotesize\@ixpt{9}%
	\abovedisplayskip 8\p@ \@plus2\p@ \@minus4\p@
	\abovedisplayshortskip \z@ \@plus\p@
	\belowdisplayshortskip 4\p@ \@plus2\p@ \@minus2\p@
	\def\@listi{\leftmargin\leftmargini
		\topsep 4\p@ \@plus2\p@ \@minus2\p@
		\parsep 2\p@ \@plus\p@ \@minus\p@
		\itemsep \parsep}%
	\belowdisplayskip \abovedisplayskip
}
\newcommand{\OB}{\mathrm{OB}}
\renewcommand{\emptyset}{\varnothing}
\renewcommand{\rho}{\varrho}
\newtheorem{thm}{Theorem}[section]
\newtheorem{lemma}[thm]{Lemma}
\newtheorem*{mainthm}{Main Theorem}
\theoremstyle{remark}
\newtheorem{rem}[thm]{Remark}
\renewcommand{\iff}{\Leftrightarrow}
\theoremstyle{definition}
\newtheorem{defn}[thm]{Definition}
\def\MPB{{\mathbb{P}}}
\def\MQB{{\mathbb{Q}}}
\def\MRB{{\mathbb{R}}}
\def\k{\kappa}
\def\l{\lambda}
\def\a{\alpha}
\newcommand{\indep}{\,\,\raise.2em\hbox{$\,\mathrel|\kern-.93em\lower.4em\hbox{$\smile$}$}}
\newcommand{\nindep}{\,\,\raise.2em\hbox{$\mathrel|\kern-.945em\lower.4em\hbox{$\smile$}
		\kern-.75em\hbox{\char'57}$}\;}
\newcommand{\shiftleft}[2]{\makebox[0pt][r]{\makebox[#1][l]{#2}}}
\newcommand{\myind}[1]{   \indep{\raise 7pt\hbox{\shiftleft{4pt}{{\tiny #1}}}}   }
\newcommand{\nmyind}[1]{   \indep{\raise 7pt\hbox{\shiftleft{4pt}{{\tiny #1}}}}{\raise .5pt\hbox{\shiftleft{10pt}{$\not$}}}   }
\newcommand{\myhatind}[1]{
	verset{\sim}{\indep}{\raise 7pt\hbox{\shiftleft{5pt}{{\tiny #1}}}}  }
\newtheoremstyle{InProofNum}
{\topsep}{\topsep}              %%% space between body and thm
{}                      %%% Thm body font
{}                              %%% Indent amount (empty = no indent)
{\itshape}                     %%% Thm head font
{.}                             %%% Punctuation after thm head
{ }                             %%% Space after thm head
{\thmname{#1}\thmnote{ #3}}%%% Thm head spec
\theoremstyle{InProofNum}
\DeclareMathOperator{\crit}{\mathrm{crit}}
\DeclareMathOperator{\HOD}{\textbf{HOD}}
\DeclareMathOperator{\dom}{\mathrm{dom}}
\DeclareMathOperator{\ZFC}{\textbf{ZFC}}
\DeclareMathOperator{\C}{C}
\DeclareMathOperator{\DTP}{\textbf{DTP}}
\DeclareMathOperator{\Ult}{Ult}
\DeclareMathOperator{\Suc}{Suc}
\DeclareMathOperator{\Lev}{Lev}
\begin{document}

\title{Definable tree property for uncountable regular cardinals}

\author[M. Golshani ]{Mohammad Golshani }

\address{School of Mathematics, Institute for Research in Fundamental Sciences (IPM), P.O. Box:
19395-5746, Tehran, Iran.}

\email{golshani.m@gmail.com}

\urladdr{https://math.ipm.ac.ir/~golshani/}

\author[M. Mirabi]{Mostafa Mirabi}

\address{Department of Mathematics and Computer Science, The Taft School,
Watertown, CT 06795, USA.}

\email{mmirabi@wesleyan.edu}
\urladdr{https://sites.google.com/site/mostafamirabi/}

\subjclass[2010]{03E35 , 03E55}

%\date{\today}

\keywords{Aronszajn tree, Definable tree property}

\thanks{The first author's research has been supported by a grant from IPM (No. 1402030417).}

\maketitle

\begin{abstract}
The primary goal of this paper is to establish a model of $\ZFC$  wherein the definable tree property is affirmed for all uncountable regular cardinals. This endeavor commences with the utilization of both a supercompact cardinal and a measurable cardinal that exceeds it. Subsequently, we construct a $\ZFC$ model. Within this model, we demonstrate that the definable tree property holds for all uncountable regular cardinals.   Thereby we respond to an inquiry raised in \cite{daghighi-pourmahdian}.
\end{abstract}

\section{Introduction}
\label{introduction}

This paper is dedicated to the exploration of Magidor's problem, which has been reformulated for the definable tree property. Recall that, for a cardinal $\kappa$, the \emph{tree property} at $\kappa$ asserts the absence of $\kappa$-Aronszajn trees; where  a $\kappa$-Aronszajn tree is a tree of height $\kappa$ in which all levels have size less than $\kappa$ and all branches have height less than $\kappa$. Magidor's query revolves around the consistency of the tree property being applicable to all regular cardinals greater than $\aleph_1$. Despite the numerous research efforts in this area, it seems we are very far from solving it.  Within this paper, we undertake the redefined version of Magidor's problem, focusing on the definable tree property, and present a comprehensive solution to it.

For a regular  cardinal $\kappa$, let the definable tree property at $\kappa$, denoted $\DTP(\kappa)$, be the assertion ``any
$\kappa$-tree definable in the structure $(H(\kappa), \in)$ has a cofinal branch''.
In \cite{leshem}, it is proved that if $\kappa$ is regular and $\lambda > \kappa$ is a $\Pi^1_1$-reflecting cardinal, then in the generic extension by the Levy collapse
$\mathrm{Col}(\kappa, < \lambda)$, $\DTP(\lambda)$ holds. In \cite{daghighi-pourmahdian}, this result is extended to get the definable tree property at successor of all regular cardinals.

Building upon the findings in \cite{daghighi-pourmahdian} and \cite{leshem}, we proceed to establish a global consistency result. This involves building a model of $\ZFC$ wherein the definable tree property is true for all uncountable regular cardinals. Our result not only resolves an inquiry posed in \cite{daghighi-pourmahdian} but also furnishes a response to the definable version of Magidor's question.

 \begin{mainthm}\label{main theorem}
 Assume $\kappa$ is a supercompact cardinal and $\lambda > \kappa$ is measurable. Then there is a generic extension $W$ of the universe in which the following hold:
\begin{enumerate}
\item  $\kappa$ remains inaccessible.
\item Definable tree property  holds at all uncountable regular cardinals less than $\kappa$.
 \end{enumerate}
In particular, the rank initial segment  $W_\kappa$ of $W$ is a model of $\ZFC$ in which definable tree property  holds at all uncountable regular cardinals.
 \end{mainthm}

Note that we have the flexibility to reduce the large cardinal strength of $\lambda$ to a $\Pi^1_1$-reflecting cardinal.

This paper is organized as follows.  In Section \ref{some preliminaries}, we fix some notation and conventions and present some preliminaries and results that will be used throughout the paper. In Section \ref{proof of main theorem}, we prove the Main Theorem \ref{main theorem}. We assume that the reader is familiar with the papers
\cite{gitik-merimovich} and \cite{merimovich}, though we have provided some background from these papers in the next section.

%%%%%%%%%%%%%%%%%%%%%%%%%%%%%%%%%

\section{Preliminaries}
\label{some preliminaries}
The term tree encompasses various meanings in different contexts. In this paper, when we say a tree, we are referring a partially ordered set  $(T, <_T)$ such that for any element $t\in T$, the set $\left\{s \in T : s <_T t \right\}$, which consists of the predecessors of $t$, must be well-ordered under the relation $<_T$. Additionally, there exists a root element $r\in T$ such that, for every element $t$ distinct from $r$, $r<_T t$.

First we recall some basic definition, observations and facts that will serve as building blocks for the rest of the paper.

Most of the following concepts and definitions can be found in \cite{merimovich}.

   We begin with the assumption of $GCH$ in conjunction with a Mitchell increasing sequence of extenders denoted as $\bar{E}=\langle E_\xi: \xi < o(\bar{E}) \rangle$ defined on $\kappa,$ and $\lambda$ represents the least inaccessible cardinal greater than $\kappa.$ We define ``\textit{supercompact extender based Radin forcing}'', denoted by $\PP_{\bar{E}}$, which was introduced by Merimovich \cite{mer5}, and characterized by the following key properties:

$\PP_{\bar{E}}$ preserves the inaccessibility of $\kappa$.
It collapses all cardinals in the interval $(\kappa, \lambda)$ while preserving cardinals greater than or equal to $\lambda,$ ensuring that $\kappa^+=\lambda$ in the extension $\PP_{\bar{E}}$.
It adds a club $C=\left\{\kappa_\xi: \xi<\kappa\right\}$ consisting of $V$-measurable cardinals.
For limit ordinal $\xi<\kappa$ and the least inaccessible cardinal $\lambda_\xi$ above $\kappa_\xi,$ if $\mu$ is a regular cardinal in the interval $(\kappa_\xi, \lambda_\xi),$ then there exists a cofinal sequence into $\mu$ with an order type less than or equal to $\kappa_\xi.$ This implies that all cardinals $\mu$ in the interval $(\kappa_\xi, \lambda_\xi)$ are collapsed.
The forcing preserves $\lambda_\xi,$ meaning that for limit ordinals $\xi<\kappa,$ $\kappa_\xi^+=\lambda_\xi$ in the extension by $\PP_{\bar{E}}$. All other cardinals below $\kappa$ remain preserved.

We proceed to consider $G$ as $\PP_{\bar{E}}$-generic over $V$ and introduces a new forcing notion $\PP_{\bar{E}}^\pi$, termed the "projected supercompact extender based Radin forcing," along with a Prikry-type projection $\pi: \PP_{\bar{E}} \rightarrow \PP_{\bar{E}}^\pi.$ The forcing notion $\PP_{\bar{E}}^\pi$ adds the club $C$ without collapsing any cardinals. Importantly, the quotient forcing exhibits sufficient homogeneity properties to establish that $HOD^{V[G]} \subseteq V[G^\pi],$ where $G^\pi$ is the filter generated by $\pi[G].$ This leads to the conclusion that, for all limit ordinals $\xi<\kappa,$ $(\kappa_\xi^+)^{V[G]}=\lambda_\xi$ remains inaccessible in $HOD^{V[G]}$.

We proceed with recalling the "\textit{supercompact extender based Radin forcing}." This particular forcing was initially defined by Merimovich \cite{merimovich}  and serves as the foundation for the subsequent proof presented in the paper. The section outlines the essential properties of this forcing.

Assume that the $GCH$ is valid in the ground model, and let a supercompact cardinal $\k$ with $\lambda$ as the smallest strongly inaccessible cardinal greater than $\k$ be given.
Let  $j: V \rightarrow M$ be an elementary embedding with  $\crit(j)=\k$, and $j(\k)>\lambda$ such that  $M\supseteq ^{<\lambda}M$.
For each ordinal $\a$, $\lambda_\alpha$ is defined as the minimal $\eta$ such that $\a < j(\eta)$. The generators of this embedding $j$, denoted by $\mathfrak{g}(j)=\left\{\k_\xi : \xi\in \textbf{Ord}\right\}$, are recursively defined as:

\(\kappa_\xi = \min\left\{\alpha \in \textbf{Ord} : (\forall \zeta < \xi) \, (\forall \eta \in \textbf{Ord}) \, (\forall f : \eta \rightarrow \textbf{Ord}) \, (j(f)(\kappa_\zeta) \neq \alpha)\right\}.\)

%$\k_\xi=\min\left\{ \alpha\in \textbf{Ord} :\forall \right\}$.
If $\mathfrak{g}(j)$ forms a set, we can encode $j$ using an
extender $E = \langle E(\alpha): \a \in \mathfrak{g}(j) \rangle$, where $E(\a)$ is a measure on $\lambda_\alpha$, for each $\alpha\in \mathfrak{g}(j)$, which is defined by
$$ \forall A\subseteq \lambda_\alpha\, (A\in E(\alpha))\Leftrightarrow \alpha\in j(A).$$

We deal with embeddings having their generators below $j(\lambda)$ (and hence a set), and consider the natural elementary embedding $j_E: V \rightarrow Ult(V, E).$ We may further assume that $j=j_E.$  We focus on embeddings where their generators are below $j(\lambda)$, which implies they form a set. We also consider the natural elementary embedding $j_E: V \rightarrow \text{Ult}(V, E)$. For simplicity, we assume $j$ coincides with $j_E$.

Consider a sequence of extenders $\bar{E}=\langle E_\xi: \xi < o(\bar{E})\rangle$ on $\k$ satisfying the following conditions:
\begin{enumerate}
    \item $\bar{E}$ is Mitchell increasing, meaning that for each $\xi< o(\bar{E})$, we have $\langle E_\zeta: \zeta < \xi \rangle \in E_\xi$.

    \item For the corresponding elementary embeddings $j_{E_\xi}: V \rightarrow M_\xi \simeq \text{Ult}(V, E_\xi)$, the following hold:

    \begin{enumerate}
        \item[(2-1)] The $\crit(j_{E_\xi})=\k$, and $j_{E_\xi}(\k) \geq \lambda$.

        \item[(2-2)] $\lambda$ is the minimal ordinal such that $M_\xi \nsupseteq ^{\lambda}M_\xi$, and therefore, $M_\xi \supseteq ^{<\lambda}M_\xi$.

        \item[(2-3)] For each $\xi < o(\bar{E})$, the generator of $j_{E_\xi}$,  $\mathfrak{g}(j_{E_\xi})$, is contained within $\sup j_{E_\xi}[\lambda]$.
    \end{enumerate}
\end{enumerate}
It's worth noting that if $\xi_1 < \xi_2 < o(\bar{E})$, then $j_{E_{\xi_1}}(\lambda) < j_{E_{\xi_2}}(\lambda)$. Additionally, let $\lambda \leq \epsilon \leq \sup\left\{j_{E_\xi}(\lambda): \xi < o(\bar{E})\right\}$.

An extender sequence $\bar{\nu}$ is structured as $ \langle \tau, e_0, \dots, e_\xi, \dots  \rangle_{\xi<\mu}$. Here, $\langle e_\xi: \xi < \mu \rangle$ is a Mitchell increasing sequence of extenders, all sharing the same critical points and closure points. Furthermore, we have $crit(e_0) \leq \tau < j_{e_0}(\alpha),$ where $\alpha$ is the closure point of $M_{e_0}$.
The order of the extender sequence $\bar{\nu}$ is $\mu$ which we represent as $o(\bar{\nu}) = \mu.$ Specifically, we use $\bar{\nu}_0$ to denote $\tau$ and naturally extend this notation to $\bar{\nu}_{1+\xi}$ for $e_\xi.$
It's worth noting that formally, the Mitchell order function $o(...)$ is defined for different types of objects. The first type of object takes the form $\langle E_\xi: \xi<\mu \rangle,$ while the second type is represented as $\langle \tau, e_0, \dots, e_\xi, \dots  \rangle_{\xi<\mu}.$ However, in both cases, only the extenders are taken into consideration, eliminating any potential for confusion.

The set $\mathfrak{D}$ serves as the base set in the domain of functions. For every $\k \leq \a < \sup\left\{j_{E_\xi}(\lambda): \xi < o(\bar{E})\right\}, \a \notin j[\lambda]$, we establish the definition:

\begin{center}
$\bar{\a} = \langle \a \rangle^\frown \langle E_\zeta: \zeta < o(\bar{E}), \a < j_{E_\zeta}(\lambda) \rangle.$
\end{center}

Following this, we further define:

\begin{center}
$\mathfrak{D} = \left\{\bar{\a}: \k \leq \a < \epsilon\right\}.$
\end{center}

The order $<$ on $\mathfrak{D}$ is defined as $\bar{\a} < \bar{\beta}$ if and only if $\a < \beta$.

Subsequently, the set $\mathfrak{R}$ serves as the base set for the range of functions:

\begin{center}
$\mathfrak{R} = \left\{ \bar{\nu}\in V_\lambda: \bar{\nu} \text{ is an extender sequence }  \right\}.$
\end{center}

On $\mathfrak{R}$, the order $<$ is defined by $\bar{\nu} < \bar{\mu}$ if and only if $\bar{\nu}_0 < \bar{\mu}_0$.
For technical reasons we assume that  that $\langle \rangle \in \mathfrak{R}$.

Let $d\in [j(\lambda)]^{<\lambda}$ be such that $\k, |d| \in d.$ Then we define $\text{OB(d)}$ as follows. $\nu \in \text{OB(d)}$ if and only if:
\begin{enumerate}
\item[$(\divideontimes)_1$] $\nu: \dom(\nu) \rightarrow \lambda,$ where $\dom(\nu) \subseteq d,$
\item[$(\divideontimes)_2$] $\k, |d| \in \dom(\nu),$
\item[$(\divideontimes)_3$] $|\nu| \leq \nu(|d|),$
\item[$(\divideontimes)_4$] $\forall \a<\l$ $(j(\a)\in \dom(\nu) \Rightarrow \nu(j(\a))=\a ),$
\item[$(\divideontimes)_5$] $\a\in \dom(\nu) \Rightarrow \nu(\a) < \gl_\a,$
\item[$(\divideontimes)_6$] $\a < \beta$ in $\dom(\nu) \Rightarrow \nu(\a) < \nu(\beta).$
\end{enumerate}
Also for $\nu_0, \nu_1 \in \text{OB(d)},$ set $\nu_0 < \nu_1$ iff
\begin{enumerate}
\item[$(\divideontimes)_7$]$\dom(\nu_0) \subseteq \dom(\nu_1),$

\item[$(\divideontimes)_8$] For all $\a\in \dom(\nu_0)\setminus j[\gl], \nu_0(\a) < \nu_1(\a).$
\end{enumerate}

We now define the forcing notion $\PP^*_E.$

$\PP^*_E$ consists of all functions $f: d \rightarrow \lambda^{<\omega}$, where  $d\in [j(\lambda)]^{<\lambda}$, $\k, |d| \in d,$ and  such that
\begin{enumerate}
    \item For any $j(\a) \in d, f(j(\a))=\langle \a \rangle,$
    \item For any $\a\in d\setminus j[\gl], $ there is some $k<\omega$ such that
\begin{center}
$f(\a)= \langle  f_0(\a), \dots, f_{k-1}(\a)               \rangle \subseteq \gl_\a $
\end{center}
is a finite increasing subsequence of $\gl_\a.$ For $f, g\in \PP^*_E,$
\begin{center}
$f \leq^*_{\PP^*_E} g \Leftrightarrow f \supseteq g.$
\end{center}
\end{enumerate}

\begin{rem}
$\langle \PP^*_E, \leq^*_{\PP^*_E} \rangle \approx Add(\lambda, |j(\lambda)|).$
\end{rem}

Note that for $d \in P_\lambda(\mathfrak{D})$, $\nu \in \OB(d)$ if and only if:

\begin{enumerate}
    \item[$(\clubsuit)$1] $\nu: \dom(\nu) \rightarrow \mathfrak{R},$
    \item[$(\clubsuit)$2] $\bar{\k} \in \dom(\nu) \subseteq d \cup j[\lambda],$
    \item[$(\clubsuit)$3] $|\nu| < \nu(\bar{\k})_0,$
    \item[$(\clubsuit)$4] $\forall \a < \lambda \, (j(\a) \in \dom(\nu) \Rightarrow \nu(j(\a)) = \langle \a \rangle),$
    \item[$(\clubsuit)$5] $\forall \bar{\a} \in \dom(\nu)\setminus j[\lambda] \, (o(\nu(\bar{\a})) < o(\bar{\a})),$
    \item[$(\clubsuit)$6] For each $\bar{\a} \in \dom(\nu)\setminus j[\lambda]$ such that $\bar{\a} \neq \bar{\k}$, the following is satisfied: let
        \begin{center}
          $\nu(\bar{\k}) = \langle \tau, e_0, \dots, e_\xi, \dots \rangle_{\xi < \zeta_\k}$ (where $crit(e_0) = \tau$)
         \end{center}
          and
         \begin{center}
              $\nu(\bar{\a}) = \langle \tau', e'_0, \dots, e'_\xi, \dots \rangle_{\xi < \zeta_\a}$,
         \end{center}
          then $\langle e_{\zeta+\xi}: \xi < \zeta_\k \rangle = \langle e'_{\xi}: \xi < \zeta_\a \rangle$, where $\zeta < \zeta_\k$ is minimal such that $\tau' \in (\sup_{\zeta'<\zeta}j_{e_{\zeta'}}(\tau), j_{e_\zeta}(\sigma))$, where $\sigma$ is the closure point of $e_\zeta.$
    \item[$(\clubsuit)$7] $\forall \bar{\a}, \bar{\beta} \in \left(\dom(\nu)\setminus j[\lambda]\right) \, \bigg(\bar{\a} < \bar{\beta} \Rightarrow \nu(\bar{\a}) < \nu(\bar{\beta})\bigg).$
\end{enumerate}

On $\OB(d)$, the partial order $<$ is defined by $\mu < \nu$ if either

\begin{center}
$\forall \bar{\a} \in \dom(\mu) \cap \dom(\nu) \, (o(\mu(\bar{\a})) > o(\nu(\bar{\a}))$ and $\mu(\bar{\a}) < \nu(\bar{\a}))$
\end{center}
or
\begin{center}
$\dom(\mu) \subseteq \dom(\nu)$ and $\forall \bar{\a} \in \dom(\mu) \, (o(\mu(\bar{\a})) \leq o(\nu(\bar{\a}))$ and $\mu(\bar{\a}) < \nu(\bar{\a})).$
\end{center}

 Moreover, assuming  $d\in P_\gl(\mathfrak{D})$, we have
\begin{enumerate}
\item[($\spadesuit$)1]  If $T \subseteq \OB(d)^{<\xi} (1<\xi \leq \omega)$ and  $n<\omega.$ Then
\begin{itemize}
\item $Lev_n(T)=T \cap \OB(d)^{n+1},$

\item $\Suc_T(\langle \rangle) = \Lev_0(T),$

\item $\Suc_T(\langle \nu_o, \dots, \nu_{n-1}         \rangle)=\left\{\mu\in \OB(d): \langle \nu_o, \dots, \nu_{n-1}, \mu \rangle \in T    \right\}.$
\end{itemize}
\item[($\spadesuit$)2] For $\langle \nu \rangle \in T,$ let
\begin{center}
 $T_{\langle \nu \rangle}= \left\{  \langle \nu_o, \dots, \nu_{k-1} \rangle: k<\omega, \langle \nu, \nu_o, \dots, \nu_{k-1} \rangle \in T \right\}$
\end{center}
 and define by recursion for $\langle \nu_o, \dots, \nu_{n-1} \rangle \in T,$
\begin{center}
$T_{\langle \nu_o, \dots, \nu_{n-1} \rangle}= (T_{\langle \nu_o, \dots, \nu_{n-2} \rangle})_{\langle \nu_{n-1} \rangle}.$
\end{center}
\item[($\spadesuit$)3] The measures $E_\xi(d)$ $(\xi < o(\bar{E}))$ on $\OB(d)$ are defined as follows:
\begin{center}
$\forall X\subseteq \OB(d)$ $($ $X\in E_\xi(d) \Leftrightarrow mc_\xi(d)\in j_{E_\xi}(X) $ $)$,
\end{center}
where
\begin{center}
$mc_\xi(d)=\left\{ \langle j_{E_\xi}(\bar{\a}), R_\xi(\bar{\a}) \rangle : \bar{\a}\in d, \a < j_{E_\xi}(\l) \right\},$
\end{center}
and $R_\xi$ is defined for each $\k \leq \a < \epsilon$ by
\begin{center}
$R_\xi(\a)=\langle \a \rangle ^{\frown} \langle E_{\xi'}: \xi' < \xi,    \a < j_{E_{\xi'}}(\l)                     \rangle.$
\end{center}
Also set
\begin{center}
$E(d)=\bigcap\left\{ E_\xi(d): \xi < o(\bar{E})            \right\}.$
\end{center}
\item[($\spadesuit$)4] A tree $T \subseteq \OB(d)^{<\omega}$ is called a $d$-tree, if
\begin{itemize}
\item For each $\langle \nu_o, \dots, \nu_{n-1} \rangle\in T,$ we have $\nu_0 < \dots < \nu_{n-1},$
\item $\forall \langle \nu_o, \dots, \nu_{n-1} \rangle \in T, \Suc_T(\langle \nu_o, \dots, \nu_{n-1} \rangle)\in E(d).$
\end{itemize}
\item[($\spadesuit$)5] Assume $c\in P_{\k^+}(\mathfrak{D}), c \subseteq d,$ and $T$ is a tree with elements from $\OB(d)$. Then the projection of $T$ to a tree with elements from $\OB(c)$ is
\begin{center}
$T\upharpoonright c = \left\{\langle \nu_o\upharpoonright c, \dots, \nu_{n-1}\upharpoonright c \rangle: n<\omega, \langle \nu_o, \dots, \nu_{n-1} \rangle\in T                    \right\}$.
\end{center}
\end{enumerate}

%%%

Recall that $\PP^*_{\bar{E}, \epsilon}$ consists of all functions $f: d \rightarrow ^{<\omega}\mathfrak{R}$ such that
\begin{enumerate}
\item[$(\bigstar)1$] $\bar{\k}\in d\in P_\l(\mathfrak{D})$,
\item[$(\bigstar)2$] For each $\bar{\a}\in d, f(\bar{\a})=\langle f_0(\bar{\a}), \dots, f_{k-1}(\bar{\a}) \rangle$ is an increasing sequence in $\mathfrak{R},$
\item[$(\bigstar)3$] For each $\bar{\a}\in d$ and $i< |f(\bar{\a})|,$ $(o(f_i(\bar{\a})) < o(\bar{\a}) ),$
\item[$(\bigstar)4$] For each $\bar{\a}\in d,$ the sequence $\langle  o(f_i(\bar{\a})): i< |f(\bar{\a})|       \rangle$ is non-increasing.
\end{enumerate}

For $f, g \in \PP^*_{\bar{E}, \epsilon},$ we say $f$ is an extension of $g$ $(f \leq^*_{\PP^*_{\bar{E}, \epsilon}} g)$ if $f \supseteq g.$

\begin{rem}
Clearly $\PP^*_{\bar{E}, \epsilon} \simeq Add(\gl, \epsilon).$
\end{rem}

We write $OB(f), E_\xi(f), E(f), mc_\xi(f)$ and $f$-tree, for $OB(\dom(f)), E_\xi(\dom(f)),$
$E(\dom(f)),$
\\
$mc_\xi(\dom(f))$ and $\dom(f)$-tree respectively, where $f\in \PP^*_{\bar{E}, \epsilon}.$
The following Lemma is immediate.

\begin{lemma}
$\langle \PP^*_{\bar{E}, \epsilon},     \leq^*_{\PP^*_{\bar{E}, \epsilon}}       \rangle$ satisfies the $\gl^+-c.c.$
\end{lemma}

Assume $f\in \PP^*_{\bar{E}, \epsilon}$ and $\nu\in OB(f).$ Define $g=f_{\langle \nu \rangle}$ to be of the form $g= g_{\leftarrow}^{\frown}g_{\rightarrow}$ (the case $g_{\leftarrow}=\emptyset$ is allowed) where
\begin{enumerate}
\item $\dom(g_{\rightarrow})=\dom(f),$
\item For each $\bar{\a}\in \dom(g_{\rightarrow})$,
\begin{center}
 $g_{\rightarrow}(\bar{\a}) = \left\{ \begin{array}{l}
       f(\bar{\a}) \upharpoonright k^{\frown} \langle \nu(\bar{\a}) \rangle  \hspace{1.1cm} \text{ if } \bar{\a}\in \dom(\nu), \nu(\bar{\a}) > f_{|f(\bar{\a})|-1}(\bar{\a}),\\
       f(\bar{\a})  \hspace{3cm} \text{}  Otherwise,
 \end{array} \right.$
\end{center}
where
\begin{center}
$k=\min\left\{l \leq |f(\bar{\a})|: \forall l \leq i < |f(\bar{\a})|, o(f_i(\bar{\a})) < o(\nu(\bar{\a}))        \right\}.$
\end{center}
The above value of $k$ is defined so as to ensure that $\langle o(f_i(\bar{\a})) : i<k \rangle ^{\frown} \langle   o(\nu(\bar{\a}))   \rangle$ is non-increasing.
\item $\dom(g_{\leftarrow})=\left\{\nu(\bar{\a}): \bar{\a}\in \dom(\nu),  o(\nu(\bar{\a})) >0         \right\}$,
\item For each $\bar{\a}\in \dom(\nu)$ with $o(\nu(\bar{\a})) >0$ we have
\begin{center}
$g_{\leftarrow}(\nu(\bar{\a}))= f(\bar{\a}) \upharpoonright (|f(\bar{\a})|\setminus k),$
\end{center}
where $k$ is defined as above.
\end{enumerate}

Also recall that $\PP_{\bar{E}, \epsilon, \rightarrow}$ consists of pairs $p= \langle f, A \rangle$ where
\begin{enumerate}
\item $f\in \PP^*_{\bar{E}, \epsilon},$
\item $A$ is an $f$-tree such that for each $\langle \nu \rangle \in A$ and each $\bar{\a}\in \dom(\nu)$
\begin{center}
$f_{|f(\bar{\a})|-1}(\bar{\a}) < \nu(\bar{\a}).$
\end{center}
\end{enumerate}
In this case we write $f^p, A^p$ and $mc_\xi(p)$ for $f, A$ and $mc_\xi(f)$, respectively.

Suppose $p, q \in \PP_{\bar{E}, \epsilon, \rightarrow}.$ We say $p \leq^*_{\PP_{\bar{E}, \epsilon, \rightarrow}} q$ ($p$ is a Prikry extension of $q$) if
\begin{enumerate}
\item $f^p \leq^*_{\PP^*_{\bar{E}, \epsilon}} f^q,$
\item $A^p \upharpoonright \dom(f^q) \subseteq A^q$.
\end{enumerate}

Suppose $\langle e_i: i< n \rangle$ $(n<\omega)$ is a sequence of extenders such that $e_i \in V_{crit(e_{i+1})}.$  Roughly speaking, we define the product forcing notion $\PP=\prod_{i<n}\PP_{e_i}$ using the definitions of the Prikry with extenders forcing notions coordinatewise. More precisely, for each $\langle p_i: i< n \rangle, \langle q_i: i< n \rangle \in \PP,$
\begin{center}
$\langle p_i: i< n \rangle \leq_\PP \langle q_i: i< n \rangle \Leftrightarrow \forall i<n, p_i \leq_{\PP_{e_1}} q_i,$
\end{center}
and
\begin{center}
$\langle p_i: i< n \rangle \leq^*_\PP \langle q_i: i< n \rangle \Leftrightarrow \forall i<n, p_i \leq^*_{\PP_{e_1}} q_i.$
\end{center}
For $p=\langle p_i: i< n \rangle\in \PP,$ we write $p_{\leftarrow}$ and $p_{\rightarrow}$ to denote $p_0^{\frown} \dots ^{\frown} p_{n-2}$ and $p_{n-1}$ respectively. Also  assuming  $\langle \nu  \rangle \in A^{p_{\rightarrow}}$, we define the condition $p_{\langle \nu \rangle}$ recursively as follows:
\begin{center}
$p_{\langle \nu \rangle}=p_{\leftarrow}^{\frown}p_{\rightarrow \langle \nu \rangle}.$
\end{center}
Note that based on the above definitions of $p_{\leftarrow}$ and $p_{\rightarrow}$, for each $p, q \in \PP$, we have
\begin{center}
$p\leq q \Leftrightarrow (p_{\leftarrow} \leq q_{\leftarrow}$ and $ p_{\rightarrow} \leq q_{\rightarrow} \leq),$
\end{center}
and
\begin{center}
$p\leq^* q \Leftrightarrow (p_{\leftarrow} \leq^* q_{\leftarrow}$ and $ p_{\rightarrow} \leq^* q_{\rightarrow} \leq).$
\end{center}

A well-known fact is that $\langle \PP, \leq_{\PP}, \leq^*_{\PP} \rangle$ constitutes a  Prikry type of forcing, provided that all the forcing notions $\langle \PP_{e_i}, \leq_{\PP_{e_i}}, \leq^*_{\PP_{e_i}} \rangle$ are also of the Prikry type. Notice that since the extenders $e_i$ are distinct, we can easily factor $\PP$ into its constituent parts $\PP_{e_i}$'s. Consequently, a generic extension via $\PP$ can be comprehensively understood by examining the generic extensions via each component $\PP_{e_i}$. With this groundwork, we are now prepared to formally define the forcing notion $\PP_{\bar{E}, \epsilon}$.
A condition $p$ in the forcing notion $\PP_{\bar{E}, \epsilon}$ is of the form $p_{\leftarrow}^{\frown} p_{\rightarrow}$ where
\begin{enumerate}
\item $p_{\rightarrow} \in \PP_{\bar{E}, \epsilon, \rightarrow},$
\item $p_{\leftarrow} \in \prod_{i<n} \PP_{\bar{e}_i}$ $(n<\omega)$, in which $\bar{e}_i$ are extender sequences such that
\begin{enumerate}
\item [(2-1)] $o(\bar{e}_i) \leq o(\bar{E}),$
\item [(2-2)] $\bar{e}_i \in V_{crit(\bar{e}_{i+1})},$
\item [(2-3)] $\langle \nu \rangle \in A^{p_{\rightarrow}} \Rightarrow \nu(\bar{\k})_0 > crit(\bar{e}_{n-1}).$
\end{enumerate}
\end{enumerate}

Also, conditions in $\PP_{\bar{E}, \epsilon}$ have lower parts $\PP_{\bar{E}, \epsilon, \leftarrow}$ which is defined as follows
\begin{center}
$\PP_{\bar{E}, \epsilon, \leftarrow}=\{p_{\leftarrow}: p\in \PP_{\bar{E}, \epsilon}  \}.$
\end{center}
Also for $p\in \PP_{\bar{E}, \epsilon}$, we define $f^p$ recursively to be $f^{p_{\leftarrow}} {}^\frown  f^{p_{\rightarrow}},$ and we denote $f^{p_{\leftarrow}}$ and $f^{p_{\rightarrow}}$ by $f^p_{\leftarrow}$ and $f^p_{\rightarrow}$ respectively.

Let $p, q\in \PP_{\bar{E}, \epsilon}.$ Then $p \leq^*_{\PP_{\bar{E}, \epsilon}} q$ ($p$ is a Prikry extension of $q$) if:
\begin{enumerate}
\item $p_{\rightarrow} \leq^* q_{\rightarrow},$
\item $p_{\leftarrow} \leq^* q_{\leftarrow}.$
\end{enumerate}
%We also define $p \leq^{**}_{\PP_{\bar{E}, \epsilon}} q$ ($p$ is a strong Prikry extension of $q$), if
%\begin{enumerate}
%\item [(3)]$p \leq^* q,$
%\item [(4)] $f^p=f^q.$
%\end{enumerate}

\begin{enumerate}
\item[(*1)] Suppose $\mu, \nu \in OB(f)$ are such that $\mu < \nu,$ and for each $\bar{\a}\in \dom(\mu), o(\mu(\bar{\a})) < o(\nu(\bar{\a})).$ Then $\mu \downarrow \nu$, the reflection of $\mu$ by $\nu$ is defined as follows
\begin{center}
$\dom(\mu \downarrow \nu)=\{ \nu(\bar{\a}): \bar{\a}\in \dom(\mu)     \}$
\end{center}
and for each $\bar{\a}\in \dom(\mu)$
\begin{center}
$(\mu \downarrow \nu)(\nu(\bar{\a}))=\mu(\bar{\a}).$
\end{center}
If $\mu_0, \dots, \mu_n, \nu \in OB(f)$ are such that $\mu_i < \nu$ and for each  $\bar{\a}\in \dom(\mu_i), o(\mu_i(\bar{\a})) < o(\nu(\bar{\a})),$ then
$\langle \mu_0, \dots, \mu_n \rangle \downarrow \nu,$ the reflection of $\langle \mu_0, \dots, \mu_n \rangle$ by $\nu$ is defined to be
 $\langle  \mu_0 \downarrow \nu, \dots, \mu_n \downarrow \nu   \rangle.$

\item[(*2)] Suppose $A$ is an $f$-tree and $\langle \nu \rangle \in A.$ The tree $A \downarrow \nu$  consists of all $\langle \mu_0, \dots, \mu_n \rangle \downarrow \nu$ where:
\begin{enumerate}
\item $n<\omega,$
\item $\langle \mu_0, \dots, \mu_n \rangle \in A,$
\item $\forall i\leq n$ $( \mu_i < \nu, \dom(\mu_i) \subseteq \dom(\nu)$ and $\forall \bar{\a}\in \dom(\mu_i), o(\mu_i(\bar{\a})) < o(\nu(\bar{\a})) ).$
\end{enumerate}
\end{enumerate}

It is easily seen that
\begin{center}
$\{\langle \nu \rangle \in A: A \downarrow \nu$ is an $f_{\langle \nu \rangle \leftarrow}$-tree$ \} \in E_1(f),$
\end{center}
and if we consider $\emptyset$ to be an $\emptyset$-tree, then
\begin{center}
$\{\langle \nu \rangle \in A: A \downarrow \nu$ is an $f_{\langle \nu \rangle \leftarrow}$-tree$ \} \in E(f).$
\end{center}

Assume $q\in \PP_{\bar{E}, \epsilon, \rightarrow}$ and $\langle \nu \rangle \in A^q.$ The condition $p\in \PP_{\bar{E}, \epsilon}$ is the one point extension of $q$ by $\langle \nu \rangle$ ($p=q_{\langle \nu \rangle}$) if it is of the form $p_{\leftarrow} {}^{\frown}p_{\rightarrow}$ where $p_{\leftarrow} \in \PP_{\bar{e}, \rightarrow}$
and $p_{\rightarrow} \in \PP_{\bar{E}, \rightarrow}$ are defined as follows:
\begin{enumerate}
\item $f^p=f^q_{\langle \nu \rangle}$,
\item $A^{p_{\rightarrow}} =A^q_{\langle \nu \rangle},$
\item $A^{p_{\leftarrow}} =A^q \downarrow \nu.$
\end{enumerate}
Define $q_{\langle \nu_0, \dots, \nu_n \rangle}$ recursively by
\begin{center}
$q_{\langle \nu_0, \dots, \nu_n \rangle}=(q_{\langle \nu_0, \dots, \nu_{n-1} \rangle})_{\langle \nu_n \rangle}$,
\end{center}
where $\langle \nu_0, \dots, \nu_{n-1} \rangle \in A^q.$

Assume $p\in \PP_{\bar{E}, \epsilon}$ and $\langle \nu \rangle \in A^{p_{\rightarrow}}.$ Then
\begin{center}
$p_{\langle \nu \rangle}=p_{\leftarrow} {}^{\frown} p_{\rightarrow}\langle \nu \rangle.$
\end{center}
Define $p_{\langle \nu_0, \dots, \nu_n \rangle}$ recursively by
\begin{center}
$p_{\langle \nu_0, \dots, \nu_n \rangle}=(p_{\langle \nu_0, \dots, \nu_{n-1} \rangle})_{\langle \nu_n \rangle}$
\end{center}

Let $p, q \in \PP_{\bar{E}, \epsilon}.$ Then $p \leq_{\PP_{\bar{E}, \epsilon}} q$ ($p$ is stronger than $q$) if $p= r^{\frown} s$
and there is $\langle \nu_0, \dots, \nu_{n-1} \rangle \in A^{q_{\rightarrow}}$ such that
\begin{enumerate}
\item $s \leq^*_{\PP_{\bar{E}, \epsilon}} q_{\rightarrow \langle \nu_0, \dots, \nu_{n-1} \rangle},$
\item $r \leq q_{\leftarrow}$.
\end{enumerate}

Assume $p \in \PP_{\bar{E}, \epsilon}.$ Then $p_{\rightarrow} \in \PP_{\bar{E}, \epsilon},$ and we define
\begin{center}
$\PP_{\bar{E}, \epsilon} \downarrow p_{\rightarrow} =\{q\in \PP_{\bar{E}, \epsilon}: q \leq p_{\rightarrow}         \}$
\end{center}
and
\begin{center}
$\PP_{\bar{E}, \epsilon} \downarrow p_{\leftarrow} =\{r: r \leq p_{\leftarrow}  \}.$
\end{center}

Let's state the main properties of our forcing notion.
\begin{lemma}
$\PP_{\bar{E}, \epsilon}$ satisfies the $\l^+$-c.c.
\end{lemma}

\begin{proof}
Assume not, and let $A \subseteq \PP_{\bar{E}, \epsilon}$ be an antichain of size $\l^+.$ We may assume without loss of generality that all $p_{\leftarrow},$ for $p\in A$
are the same (as there are only $\lambda$-many such $p_{\leftarrow}$). Note that for any $p, q \in \PP_{\bar{E}, \epsilon},$ is $f^p$ is compatible with $f^q$ in $\PP^*_{\bar{E}, \epsilon},$ then $p$ and $q$ are compatible in $\PP_{\bar{E}, \epsilon}.$ It follows that $\{f^p: p\in A   \} \subseteq \PP^*_{\bar{E}, \epsilon}$
is an antichain of size $\gl^+,$ which contradicts Lemma 4.17.
\end{proof}

The following factorization property is immediate.
\begin{lemma} (Factorization Lemma)
For any $p\in \PP_{\bar{E}, \epsilon},$
\begin{center}
$\PP_{\bar{E}, \epsilon} \downarrow p \simeq \PP_{\bar{E}, \epsilon} \downarrow p_{\leftarrow} \times \PP_{\bar{E}, \epsilon} \downarrow p_{\rightarrow}.$
\end{center}
\end{lemma}

The next Lemmas are proved in \cite{gitik-merimovich}.
\begin{lemma}
 $\langle  \PP_{\bar{E}, \epsilon}, \leq, \leq^* \rangle$ satisfies the Prikry property.
\end{lemma}

\begin{lemma}
In a $ \PP_{\bar{E}, \epsilon}$-generic extension, $\l$ is preserved.
\end{lemma}

Let $G$ be $\PP_{\bar{E}, \epsilon}$-generic over $V$, and for $\k \leq \a < \epsilon$ let
\begin{center}
$G^{\bar{\a}}=\bigcup \{ f^p_{\rightarrow}(\bar{\a}): p\in G, \bar{\a}\in \dom(f^p_{\rightarrow})          \}$
\end{center}
and
\begin{center}
$C^{\bar{\a}}=\bigcup \{ \bar{\nu}_0: \bar{\nu}\in G^{\bar{\a}}         \}.$
\end{center}

\begin{lemma}
\begin{enumerate}
\item  $C^{\bar{\k}}$ is a club of $\k,$
\item $\a \neq \beta \Rightarrow C^{\bar{\a}} \neq C^{\bar{\beta}},$
\item  Forcing with $\PP_{\bar{E}, \epsilon}$ collapses all cardinal in $(\k, \l)$ onto $\k.$
\end{enumerate}
\end{lemma}

It follows from our results that
\begin{center}
$\l=(\k^+)^{V[G]}.$
\end{center}
Also by Lemma 4.27, $2^\k \geq |\epsilon|,$ and using the $\gl$-chain condition of the forcing, we can conclude that
$(2^\k)^{V[G]} \leq (|\PP_{\bar{E}, \epsilon}|^{<\gl})^{\k}=\epsilon,$ and hence
\begin{center}
 $V[G]\models$``$2^\k=|\epsilon|$''.
\end{center}

\begin{lemma}
 If $cf(o(\bar{E})) > |\epsilon|,$ then $\k$ remains measurable in $V[G]$.
\end{lemma}

The following Theorem is a consequence of the previously established results, the factorization property of the forcing motion $\mathbb{P}_{\bar{E}, \epsilon}$, and the application of some reflective arguments.

\begin{lemma}
Assume  $G$ is $\PP_{\bar{E}, \epsilon}$-generic over $V$. Let $\langle \k_\xi: \xi<\mu \rangle$ be an increasing enumeration of $C^{\bar{\k}}$, and for each $\xi<\mu,$ let $\lambda_\xi$ be the least inaccessible above $\k_\xi.$ Then
\begin{enumerate}
\item A cardinal $\eta<\k$ is collapsed in $V[G]$ iff there exists a limit ordinal
 $\xi<\mu,$ such that $\eta \in (\k_\xi, \gl_\xi),$ and then $\eta$ is collapsed to $\k_\xi,$

\item For each limit $\xi<\mu,$ $(\k_\xi^+)^{V[G]}= \gl_\xi.$
\end{enumerate}
\end{lemma}

%%%%%%%%%%%%%%%%%%%%%%%%%%%%%%%
%%%%%%%%%%%%%%%%%%%%%%%%%%%%%%%
%%%%%%%%%%%%%%%%%%%%%%%%%%%%%%%
%%%%%%%%%%%%%%%%%%%%%%%%%%%%%%%
%%%%%%%%%%%%%%%%%%%%%%%%%%%%%%%
%%%%%%%%%%%%%%%%%%%%%%%%%%%%%%%
%%%%%%%%%%%%%%%%%%%%%%%%%%%%%%%
%%%%%%%%%%%%%%%%%%%%%%%%%%%%%%%

%\newpage
%.
%\newpage

Now we bring some definitions in a formal way.
\begin{defn}
Assume $M$ is an inner model, and $X$ is a class. Then
\begin{itemize}
    \item $\HOD_X$ denotes the class of all sets which are hereditarily ordinal-definable using parameters from $X$,
    \item $\HOD^M$ denotes the class of hereditarily ordinal-definable sets in the sense of model $M$.
    \item In general and if $X$ is a class of $M$,  we  define $(\HOD_X)^M$ as the class of hereditarily ordinal-definable sets with parameters from $X$ in the sense of model $M$.
\end{itemize}
\end{defn}

Note that, in particular, $\HOD$ is just $\HOD_X$,  where $X$ is the class of all ordinals.

\begin{defn}
    Assume $\mathbb{P}$ is a forcing notion.
    \begin{enumerate}
        \item $\mathbb{P}$ is called homogeneous, if for all
        $p, q \in \mathbb{P}$, there is an automorphism $\pi$
        of $\mathbb{P}$ with $\pi(p)=q.$

        \item $\mathbb{P}$ is called weakly homogeneous if and only if for every
two conditions $p, q$ in $\mathbb{P}$, there are $p'\leq p$
and  $q'\leq q$ and an automorphism $\pi: \mathbb{P}/ p' \rightarrow \mathbb{P} / q'$.
    \end{enumerate}
\end{defn}
It is clear that every homogeneous forcing notion is weakly homogeneous.
We will use the following well-known Fact.
\begin{lemma}
\label{weak}
 Assume $\MPB$ is a weakly  homogeneous ordinal definable forcing notion and let $G$ be $\MPB$-generic over $V$. Then $(\HOD_V)^{V[G]} \subseteq V.$
 \end{lemma}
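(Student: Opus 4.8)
The plan is to argue by $\in$-induction and reduce everything to a single key claim: a set $x \in V[G]$ with $x \subseteq V$ that is definable in $V[G]$ from parameters in $V$ must already belong to $V$. So I would start by unwinding the definition of $\HVD$: if $x \in \HVD^{V[G]}$, then every element of the transitive closure of $x$ is definable in $V[G]$ using parameters from $V$, and in particular every $y \in x$ again lies in $\HVD^{V[G]}$. Invoking the induction hypothesis gives $x \subseteq V$, so it only remains to show $x \in V$.

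Next I would fix a formula $\theta$ and parameters $a_1,\dots,a_n \in V$ witnessing the definability of $x$, so that
\[
 y \in x \iff V[G] \satisfies \theta(y,a_1,\dots,a_n) \qquad \text{for all } y \in V.
\]
Since $y$ and each $a_i$ lie in $V$, they are the interpretations of the canonical check-names $\check y, \check a_1,\dots,\check a_n$, and the heart of the argument is the homogeneity of $\MPB$. For a (weakly) homogeneous forcing, given any two conditions there is an automorphism of $\MPB$ carrying one to a condition compatible with the other, and every automorphism fixes check-names; hence no condition can force $\theta(\check y,\check a_1,\dots,\check a_n)$ while another forces its negation. Consequently the trivial condition already decides each such statement, and
\[
 y \in x \iff \mathbb{1} \forces \theta(\check y,\check a_1,\dots,\check a_n).
\]

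Finally I would observe that the right-hand side is computed entirely inside $V$: the forcing relation $\mathbb{1} \forces \cdots$ is definable in $V$ from $\MPB$ and the parameters $y,a_1,\dots,a_n$, and since $\MPB$ is ordinal definable it may be replaced by its ordinal definition. Thus the class $\{\, y : \mathbb{1} \forces \theta(\check y,\check a_1,\dots,\check a_n)\,\}$ is definable in $V$ from ordinal parameters together with $a_1,\dots,a_n$. Because $x$ is a set in $V[G]$ with $x \subseteq V$, the $V$-ranks of its members are bounded by some ordinal $\alpha$, so $x = \{\, y \in V_\alpha : \mathbb{1} \forces \theta(\check y,\check a_1,\dots,\check a_n)\,\}$, and Separation in $V$ yields $x \in V$, closing the induction.

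The step I expect to require the most care is the homogeneity reduction: one must verify precisely that weak homogeneity of $\MPB$, via automorphisms fixing the check-names $\check y, \check a_1,\dots,\check a_n$, forces the truth value of $\theta(\check y,\check a_1,\dots,\check a_n)$ to be independent of the generic filter, so that it is decided by $\mathbb{1}$ and hence recoverable in $V$. Everything else — the $\in$-induction, the definability of the forcing relation, and the use of ordinal definability of $\MPB$ to eliminate it as a non-ordinal parameter — I expect to be routine.
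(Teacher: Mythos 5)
Your proof is correct, and since the paper states this lemma as a well-known fact with no proof given, there is nothing to diverge from: your argument (by $\in$-induction, using that automorphisms of a weakly homogeneous $\MPB$ fix check-names, so that $\mathbb{1}$ decides $\theta(\check y,\check a_1,\dots,\check a_n)$ and membership in $x$ is then recovered in $V$ via the definable forcing relation and Separation) is exactly the standard argument the paper implicitly relies on. As a minor aside, the ordinal definability of $\MPB$ is not actually needed for this particular conclusion, since $\MPB\in V$ is itself an admissible parameter in the final Separation step; your use of it is harmless but superfluous.
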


 In the next Lemma we present a simple proof of Leshem's result stated above, using a measurable cardinal.

\begin{lemma}
\label{leshem result}
Assume $\kappa$ is regular and $\lambda>\kappa$ is a measurable cardinal. Then  $\Vdash_{\mathrm{Col}(\kappa, < \lambda)}$``$\lambda=\kappa^+ + \DTP(\lambda)$''.
\end{lemma}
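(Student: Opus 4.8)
The plan is to treat the two assertions separately. For ``$\lambda=\kappa^+$'', first note that a measurable cardinal is inaccessible, so $\lambda$ is regular and strong limit. The forcing $\Col(\kappa,<\lambda)=\prod_{\kappa<\gamma<\lambda}\Col(\kappa,\gamma)$ (with supports of size $<\kappa$) is $<\kappa$-closed, hence adds no bounded subsets of $\kappa$ and preserves all cardinals and cofinalities $\le\kappa$; and since $\lambda$ is inaccessible each $\Col(\kappa,<\gamma)$ has size $<\lambda$, so a $\Delta$-system argument gives the $\lambda$-c.c., whence all cardinals and cofinalities $\ge\lambda$ are preserved. As every $\gamma\in(\kappa,\lambda)$ is collapsed to $\kappa$ by the factor $\Col(\kappa,\gamma)$, in $V[G]$ there are no cardinals strictly between $\kappa$ and $\lambda$, so $\lambda=\kappa^+$.

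For $\DTP(\lambda)$ I would argue by contradiction: suppose $T\in V[G]$ is a $\lambda$-Aronszajn tree definable over $(H(\lambda)^{V[G]},\in)$ by a formula $\varphi(x,p)$ with parameter $p$. Fix a normal measure $U$ on $\lambda$ in $V$ and let $j\colon V\to M=\Ult(V,U)$ be the induced embedding, so $\crit(j)=\lambda$, ${}^{\lambda}M\subseteq M$, and (as $\lambda$ is inaccessible) $H(\lambda)^V=H(\lambda)^M$; also $j(\Col(\kappa,<\lambda))=\Col(\kappa,<j(\lambda))^M$ has $\Col(\kappa,<\lambda)$ as an initial (complete) subforcing, and $j\restriction\Col(\kappa,<\lambda)=\id$ because conditions have support of size $<\kappa<\lambda$. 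The point I want to exploit is that $j(\dot T)$ names, over $j(\Col(\kappa,<\lambda))$, a $j(\lambda)$-tree defined by the same formula $\varphi$ with parameter $j(p)$, whose restriction to levels below $\lambda$ is forced to be $T$ itself, since $H(\lambda)$ is computed identically on either side.

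The heart of the matter is to manufacture, in $V[G]$, a cofinal branch through $T$. The naive strategy of building a generic $H\supseteq G$ for $j(\Col(\kappa,<\lambda))$, lifting to $j\colon V[G]\to M[H]$, and reading off the branch below a node at level $\lambda$ of $j(T)$ does produce a branch, but only in $M[H]$; this does not transfer to $V[G]$, because any model realizing such an $H$ collapses $\lambda$ (the quotient $\Col(\kappa,[\lambda,j(\lambda)))$ singularizes $\lambda$), so the branch obtained there is through a tree whose height has been collapsed and need not reflect a genuine branch of the regular-height tree $T$ in $V[G]$. Instead I would work entirely with names and the seed $\lambda=[\id]_U$: using $j$ one defines, back in $V$, a $\Col(\kappa,<\lambda)$-name $\dot B$ whose $\alpha$-th entry ($\alpha<\lambda$) is forced to be the unique $t\in\Lev_\alpha(T)$ lying $<_{j(T)}$ the canonical level-$\lambda$ node determined by the seed. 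Because $T$ is definable, this specification should be coherent, so that the chosen nodes form an increasing chain meeting every level and $\dot B$ is a genuine $\Col(\kappa,<\lambda)$-name; then $B=\dot B[G]\in V[G]$ is a cofinal branch, contradicting that $T$ is Aronszajn.

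The step I expect to be the main obstacle is exactly this localization: showing that the branch picked out by the level-$\lambda$ node of $j(T)$ is captured by a $\Col(\kappa,<\lambda)$-name, i.e.\ that the choice of the $\alpha$-th node does not secretly depend on generic information lying ``above $\lambda$''. This is where definability is indispensable and where a bare tree-property or branch-preservation argument must fail, since the full tree property is false at $\kappa^+$ and the $<\kappa$-closed quotient does add spurious cofinal branches after collapsing $\lambda$. The definability of $T$ over $H(\lambda)^{V[G]}$ (equivalently over $H(\lambda)^{M[G]}$, using $H(\lambda)^V=H(\lambda)^M$) is what should force $\dot B$ to depend only on $G$; verifying this coherence, together with the claim that the seed determines a single well-defined node at each level, is the crux of the argument.
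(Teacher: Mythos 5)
Your $\lambda=\kappa^{+}$ half is correct (the paper takes it for granted), but the $\DTP(\lambda)$ half has a genuine gap, and it begins with the step where you dismiss the lifting strategy. Your stated reason for rejecting it is wrong: if $H$ is $\Col(\kappa,[\lambda,j(\lambda)))$-generic over $V[G]$ and $j$ lifts to $j\colon V[G]\to M[G\times H]$, then for any $y\in j(T)_{\lambda}$ the set $b=\{x\in T \mid x<_{j(T)}y\}$ \emph{is} a genuine cofinal branch of $T$ --- ``$b$ is a chain meeting every level $\alpha<\lambda$ of $T$'' is absolute to outer models, since $T$ and its levels are unchanged there; the collapse of $\lambda$ in $M[G\times H]$ is irrelevant to branchhood and only puts in question whether $b\in V[G]$. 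The paper's proof is exactly this ``naive'' lift, followed by a transfer argument you never supply.

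Your substitute --- a canonical level-$\lambda$ node ``determined by the seed'' and a $\Col(\kappa,<\lambda)$-name $\dot B$ --- does not work as described, and you candidly flag that you cannot verify its crux. The seed $\lambda=[\id]_U$ is just an ordinal; which ordinals lie on level $\lambda$ of $j(T)$ is decided only by the tail forcing $\Col(\kappa,[\lambda,j(\lambda)))$, so there is no canonical such node available in $V$ or in $M[G]$, and the statement ``$\check t<_{j(\dot T)}\dot y$'' is decided by conditions in $j(\Col(\kappa,<\lambda))$, not by $\Col(\kappa,<\lambda)$-conditions; hence $\dot B$ cannot be a $\Col(\kappa,<\lambda)$-name by fiat. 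Indeed, for a \emph{non}-definable $\lambda$-Aronszajn tree in $V[G]$ (these exist, as you note) the identical seed construction produces branches in $M[G\times H]$, none of which lie in $V[G]$, so no coherence-of-names bookkeeping alone can close the gap --- a specific use of definability must enter. The paper's mechanism is homogeneity: by the $\lambda$-c.c.\ the defining parameter $z$ has a name $\dot z\in H(\lambda)^{V}$, so $j(\dot z)=\dot z$ and by elementarity $<_{j(T)}$ is defined over $H(j(\lambda))^{M[G\times H]}$ by the same formula; therefore the branch is hereditarily definable in $M[G\times H]$ from parameters in $M[G]$, and since the remaining forcing $\Col(\kappa,[\lambda,j(\lambda)))$ is homogeneous and definable in $M[G]$, the paper's Lemma 2.2 (homogeneous definable forcing adds no new hereditarily $M$-definable sets, the $\HMD$ lemma) yields $b\in M[G]\subseteq V[G]$. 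That homogeneity-plus-$\HMD$ transfer, not the seed, is the missing idea in your proposal.
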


\begin{proof}
Let $\MPB=\mathrm{Col}(\kappa, < \lambda)$ and let $G$ be $\MPB$-generic over $V$. Also let $U$ be a normal measure on $\lambda$
and let $j: V \to M \simeq \Ult(V, U)$ be the corresponding ultrapower embedding. Let $H$ be $\mathrm{Col}(\kappa, [\lambda, j(\lambda)))$-generic over $V[G]$. By standard arguments, in $V[G \times H],$ we can lift $j$ to some
elementary embedding $j: V[G] \to M[G \times H]$.

Now suppose that $T \in V[G]$ is a $\lambda$-tree which is definable in $H(\lambda)^{V[G]}=H(\lambda)[G]$. Then in $M[G \times H], T$ has a branch which is defined
in a natural way: take some $y \in j(T)_\lambda$ and let
\[
b=\left\{x \in j(T) \mid x <_{j(T)} y    \right\}=\left\{x \in T \mid x <_{j(T)} y    \right\}.
\]
But as the tree is definable and the forcing is homogeneous, we can easily see that $b$ is in fact in $V[G]$: let the formula $\phi$ and  $z \in H(\lambda)^{V[G]}$
be such that
\[
\alpha <_T \beta \iff H(\lambda)^{V[G]} \models \phi(z, \alpha, \beta).
\]
By the chain condition of the forcing, $z$ has a name $\dot{z} \in H(\lambda)^V,$ and so $j(\dot{z})=\dot{z}$. As $j$ is an elementary embedding, we have
\[
\alpha <_{j(T)} \beta \iff H(j(\lambda))^{M[G \times H]} \models \phi(z, \alpha, \beta).
\]
Note that $z, T \in M[G[,$ hence it follows that
$b \in (\HOD_{M[G]})^{M[G \times H]}$, and by the homogeneity of the forcing and Lemma \ref{weak}, $b\in M[G] \subseteq V[G]$.
\end{proof}

The following Lemma has been established in \cite{daghighi-pourmahdian}. However, for the sake of comprehensiveness, we include the proof here.
\begin{lemma}
\label{preservation of DTP}
Assume  $\DTP(\kappa^+)$ holds and let $\MPB$ be a weakly homogeneous forcing notion which preserves $H(\kappa^+).$ Then $\DTP(\kappa^+)$
holds in the generic extension by $\MPB.$
\end{lemma}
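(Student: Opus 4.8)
The plan is to exploit the hypothesis that $\MPB$ preserves $H(\kappa^+)$ in order to argue that any $\kappa^+$-tree definable over $H(\kappa^+)^{V[G]}$ already lives in the ground model, where $\DTP(\kappa^+)$ is available by assumption. Let $G$ be $\MPB$-generic over $V$ and suppose $T \in V[G]$ is a $\kappa^+$-tree which is definable in $(H(\kappa^+)^{V[G]}, \in)$, say via a formula $\phi$ and a parameter $z \in H(\kappa^+)^{V[G]}$, so that the tree ordering is given by
\[
x <_T y \iff (H(\kappa^+)^{V[G]}, \in) \models \phi(z, x, y).
\]
Since $\MPB$ preserves $H(\kappa^+)$, we have $H(\kappa^+)^{V[G]} = H(\kappa^+)^V$ as structures; in particular $z \in V$, and the relation defined by $\phi$ and $z$ is computed identically in $V$ and in $V[G]$.

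From this I would conclude that $T \in V$ and that $T$ is definable over $(H(\kappa^+)^V, \in)$ by the same $\phi$ and $z$. The point to verify carefully is that $T$ remains a $\kappa^+$-tree \emph{in $V$}: its set of nodes and its ordering are literally the same sets in both models, $\kappa^+$ is still a cardinal in $V[G]$ (any surjection $\kappa \to (\kappa^+)^V$ would itself lie in $H(\kappa^+)$, hence in the preserved $H(\kappa^+)^V$, contradicting that $\kappa^+$ is a cardinal in $V$), and each level of $T$ is a fixed subset of $H(\kappa^+)$ of size at most $\kappa$. Thus the level structure and the height $\kappa^+$ are unchanged, so $T$ is a $\kappa^+$-tree definable over $H(\kappa^+)^V$, and $\DTP(\kappa^+)$ in $V$ supplies a cofinal branch $b \in V$.

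Finally, $b \in V \subseteq V[G]$, and because the levels of $T$ are unchanged, $b$ still meets every level below $\kappa^+$ in $V[G]$, so it is a cofinal branch of $T$ in $V[G]$; this gives $\DTP(\kappa^+)$ in the extension. The only genuine obstacle is the transfer of definability across the forcing extension, and this is precisely what the preservation of $H(\kappa^+)$ delivers. Homogeneity of $\MPB$ is the feature that, in the intended applications, guarantees this preservation; alternatively, if one wishes to avoid assuming literal equality of the two $H(\kappa^+)$'s, homogeneity lets one run a $\HVD$-style descent exactly as in the proof of Lemma \ref{leshem result} to place both $T$ and its branch in $V$. Everything else is routine absoluteness bookkeeping.
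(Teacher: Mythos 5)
Your proof is correct and takes essentially the same route as the paper's: use $H(\kappa^+)^{V[G]} = H(\kappa^+)^V$ to conclude $T \in V$, apply $\DTP(\kappa^+)$ in $V$, and pull the branch back to $V[G]$. Your added observations are sound refinements rather than a different argument --- in particular, you correctly note that absoluteness of satisfaction over the common structure $(H(\kappa^+),\in)$ already gives $T \in V$ (the paper invokes homogeneity at this step, which is not strictly needed once literal preservation of $H(\kappa^+)$ is assumed), and your verifications that $\kappa^+$ is not collapsed and that the levels and cofinality of the branch are unchanged are exactly the routine details the paper leaves implicit.
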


\begin{proof}
Assume $G$ is $\MPB$-generic over $V$ and let $T \in V[G]$ be a $\kappa^+$-tree definable in $H(\kappa^+)^{V[G]}$.
As $H(\kappa^+)^{V[G]}=H(\kappa^+)^{V}$,  $T$ is defined with parameters from $V$ and since the forcing is homogeneous, $T \in V.$ By our assumption,
$T$ has a branch in $V$ and hence in $V[G]$.
\end{proof}

\section{Proof of the Main Theorem}
\label{proof of main theorem}
In this section, we present the proof of the Main Theorem \ref{main theorem}. Assume that $\kappa$
is a supercompact cardinal and let $\lambda$ be the least measurable cardinal above $\kappa$. Let $\bar{E}= \langle E_\xi \mid \xi < \lambda  \rangle$
be a Mitchell increasing sequence of extenders
such that  for each $\xi < \lambda,\, \crit(j_\xi)=\kappa, ~^{<\lambda}M_\xi \subseteq M_\xi$  and $M_\xi \supseteq V_{\kappa+2}$,
where $j_\xi: V \to M_\xi \simeq \Ult(V, E_\xi)$ is the corresponding elementary embedding. Let $\MPB_{\bar{E}}$ be the supercompact extender based Radin forcing
using $\bar{E}$, and let $G$ be  $\MPB_{\bar{E}}$-generic over $V$. Let us recall the basic properties of  $\MPB_{\bar{E}}$.

\begin{lemma}
\label{basic properties}
The following hold in $V[G]$:
\begin{itemize}
\item [(a)] There exists a club $C= \langle \kappa_\xi \mid \xi < \kappa    \rangle$ of $\kappa$ consisting of $V$-measurable cardinals.
\item [(b)] For each limit ordinal $\xi < \kappa$ let $\lambda_\xi$ be the least measurable cardinal above $\kappa_\xi.$ Then $\lambda_\xi = \kappa_\xi^+$.
\item [(c)] $\kappa$ remains inaccessible.
\end{itemize}
\end{lemma}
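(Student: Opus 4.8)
The plan is to extract all three items directly from the analysis of the supercompact extender-based Radin forcing in \cite{merimovich}, organizing the argument around the two orderings that $\MPB_{\bar{E}}$ carries: the full order $\le$ and the direct-extension order $\le^*$. First I would recall that a condition records a finite ``lower'' sequence of extender coordinates together with a ``top'' part drawn from measure-one sets governed by $\bar{E}$, that $(\MPB_{\bar{E}}, \le, \le^*)$ satisfies the Prikry property, and that the direct-extension order is highly closed. A generic $G$ then determines a generic sequence whose coordinates below $\kappa$ enumerate a club $C = \langle \kappa_\xi \mid \xi < \kappa \rangle$; closedness of $C$ is built into the Radin mechanism, since limits of generic coordinates are again coordinates.

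For items (a) and (c) I would use the two orders in tandem. Each $\kappa_\xi$ is $V$-measurable because the Mitchell-increasing hypothesis on $\bar{E}$ guarantees that the $V$-measurable cardinals below $\kappa$ form a measure-one set for the measures guiding the forcing, so genericity places every coordinate of $C$ in this set; this is (a). For (c), the Prikry property together with the closure of $\le^*$ shows that the short part of the forcing adds no bounded subset of $\kappa$ that could witness a collapse, and that $\kappa$ is neither collapsed nor singularized, so its inaccessibility in $V$ is inherited by $V[G]$.

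For (b) the point is the collapse interleaved into the forcing block associated with each coordinate. For a limit $\xi$ the coordinate $\kappa_\xi$ is a limit point of $C$, hence a limit of $V$-measurables and singular in $V[G]$, while the block sitting above $\kappa_\xi$ collapses the cardinals of the interval $(\kappa_\xi, \lambda_\xi)$, where $\lambda_\xi$ is the relevant measurable just above $\kappa_\xi$. I would read off from the factorization of $\MPB_{\bar{E}}$ below $\kappa_\xi$ that in $V[G]$ one has $\lambda_\xi = \kappa_\xi^+$ with $\lambda_\xi$ still measurable there, so that the local picture matches the configuration $\Col(\kappa_\xi, <\lambda_\xi)$ of Lemma \ref{leshem result}.

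The main obstacle will be the two technical cores of \cite{merimovich}: the verification of the Prikry property for $(\MPB_{\bar{E}}, \le, \le^*)$, on which both the preservation of $\kappa$ in (c) and the cardinal analysis in (a) and (b) rest, and the bookkeeping of the interleaved collapses needed for (b), namely that the collapse is arranged so that the measurable just above $\kappa_\xi$ becomes exactly its successor and remains measurable, rather than being collapsed outright or pushed higher. Since this is precisely the content of Merimovich's construction, in the final write-up I would cite the relevant lemmas of \cite{merimovich} rather than reprove them.
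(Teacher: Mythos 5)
The paper gives no proof of this lemma at all: it is stated as a quotation from \cite{merimovich}, which is exactly what your final paragraph proposes to do, so in substance your plan coincides with the paper's. Your outline of the mechanism is also the right picture: the Prikry property together with the closure of the direct-extension order for (c), measure-one concentration on $V$-measurables (coming from the Mitchell-increasing hypothesis) plus closedness of the generic sequence for (a), and the collapsing of the intervals $(\kappa_\xi,\lambda_\xi)$ by the forcing blocks attached to the coordinates for (b).

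One assertion in your treatment of (b) is false as stated and must be excised: $\lambda_\xi$ cannot be ``still measurable'' in $V[G]$, since there $\lambda_\xi=\kappa_\xi^+$ is a successor cardinal, and measurable cardinals are inaccessible. What the paper's later arguments actually use is measurability of $\lambda_\xi$ in $V$ (``In $V$, $\lambda_\xi$ is a measurable cardinal, so let $k:V\to N$ witness this''); the analogy with Lemma \ref{leshem result} is between the ground model, where $\lambda_\xi$ is measurable, and the factor forcing $\MPB_{\bar{e}}/p_0$ --- it is not a claim about $V[G]$. A smaller point of phrasing: the collapse of $(\kappa_\xi,\lambda_\xi)$ in Merimovich's forcing is not produced by interleaved Levy-collapse components; it is an effect of the supercompact extender-based part itself --- the closure ${}^{<\lambda}M_\xi\subseteq M_\xi$ of the target models is what makes $\lambda$ become $\kappa^+$, and this reflects to each limit point of the club. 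Since you defer the technical cores to \cite{merimovich} anyway, this does not affect the soundness of your plan, but the description should match the cited construction.
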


\begin{lemma}
In $V[G],$ the definable tree property holds at all $\lambda_\xi,$ where $\xi < \kappa$ is a limit ordinal.
\end{lemma}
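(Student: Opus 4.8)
The plan is to combine the structure established by the Radin/extender forcing $\MPB_{\bar E}$ with the local version of Leshem's argument (Lemma \ref{leshem result}). By Lemma \ref{basic properties}, for each limit ordinal $\xi<\kappa$ the cardinal $\kappa_\xi$ is $V$-measurable and the least measurable above it, namely $\lambda_\xi$, equals $\kappa_\xi^+$ in $V[G]$. The key point is that the supercompact extender based Radin forcing factors, below the condition corresponding to $\kappa_\xi$, as a two-step iteration whose top part is (up to isomorphism) a Levy collapse $\Col(\kappa_\xi,<\lambda_\xi)$ turning $\lambda_\xi$ into $\kappa_\xi^+$, exactly as in the hypothesis of Lemma \ref{leshem result}.

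First I would analyze the factorization of $\MPB_{\bar E}$ at a fixed limit $\xi<\kappa$. Writing $G$ as $G_{<\xi}\times G_{[\xi]}\times G_{>\xi}$ according to the standard decomposition of the Merimovich forcing along the generic club $C$, the aim is to isolate the component that is responsible for the equality $\lambda_\xi=\kappa_\xi^+$. Since $\kappa_\xi$ is still measurable in the relevant inner model and $\lambda_\xi$ is collapsed to $\kappa_\xi^+$, I would identify an intermediate model $N$ (a ground for a tail of the forcing) together with a normal measure $U_\xi$ on $\lambda_\xi$ in $N$, so that the quotient forcing responsible for collapsing $[\lambda_\xi,\cdot)$ down to $\kappa_\xi$ behaves like the Levy collapse in Lemma \ref{leshem result}. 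The heart of Leshem's lemma is homogeneity together with definability: a tree $T$ definable in $H(\lambda_\xi)^{V[G]}$ acquires its defining parameter from a name in $H(\lambda_\xi)$ of the ground, which is therefore fixed by the lifted ultrapower embedding, and a cofinal branch is read off from $j(T)$ and shown to land back in the original model by homogeneity.

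The main obstacle will be verifying that the relevant tail forcing is sufficiently homogeneous and that the ultrapower embedding $j_{U_\xi}$ lifts through it, while still preserving $H(\lambda_\xi)$. Unlike the clean Levy collapse of Lemma \ref{leshem result}, the Merimovich forcing above $\kappa_\xi$ is not literally a collapse, so I would argue that its restriction to the interval determining $\lambda_\xi=\kappa_\xi^+$ is either weakly homogeneous or, after a suitable reindexing, equivalent to a product of Levy collapses, which is homogeneous. Concretely, I would show: (i) $H(\lambda_\xi)^{V[G]}=H(\lambda_\xi)^{N'}$ for the appropriate intermediate $N'$, so that any defining parameter $z$ has a name $\dot z\in H(\lambda_\xi)$ fixed by $j_{U_\xi}$; (ii) $j_{U_\xi}$ lifts to an embedding defined in a further generic extension; and (iii) the branch $b=\{x\in T\mid x<_{j(T)}y\}$ for $y\in j(T)_{\lambda_\xi}$ lies in $\HVD$ of the top model and hence, by homogeneity, descends to $V[G]$.

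Once these three points are in place, the proof follows the template of Lemma \ref{leshem result} verbatim, with $\kappa_\xi,\lambda_\xi$ in place of $\kappa,\lambda$. I expect that the genuinely new work is entirely in the forcing-theoretic bookkeeping of step (i)–(ii): isolating the correct intermediate models in the Radin decomposition and checking homogeneity of the quotient, which is where familiarity with \cite{merimovich} and \cite{gitik-merimovich} is essential. The definability-plus-homogeneity extraction of the branch, step (iii), is then routine given the earlier lemma.
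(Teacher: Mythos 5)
Your high-level template (an embedding with critical point $\lambda_\xi$, definability with an ordinal parameter forcing the branch into $\HOD$ of the top model, homogeneity pulling it back down) is the right one, and your factorization of $G$ with the upper part adding no new subsets of $\lambda_\xi^+$ matches the paper's first step. But the forcing-theoretic core of your plan, steps (i)--(ii), rests on a structural claim that fails for Merimovich's forcing. There is no intermediate model of the extension over which $\lambda_\xi$ is still measurable and the remaining forcing is (even up to equivalence) $\Col(\kappa_\xi,<\lambda_\xi)$ or a product of Levy collapses: in the supercompact extender based Radin forcing, the cardinals in $(\kappa_\xi,\lambda_\xi)$ are collapsed by the extender-based Cohen parts interleaved along the Radin club, not by collapse posets, so your hoped-for ``reindexing into Levy collapses'' has nothing to latch onto. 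Your fallback alternative, weak homogeneity of the relevant quotient over $V[G]$, also fails where you need it: the lower factor $\MPB_{\bar{e}}/p_0$ is \emph{not} homogeneous; by Gitik--Merimovich it is homogeneous only \emph{modulo} the projection $s$ onto the plain Radin forcing $\MPB^\pi_{\bar{E}}$ with measure sequence $u=\langle E_\xi(\kappa)\mid \xi<\lambda\rangle$. Consequently your step (iii), ``$b\in\HVD$ of the top model, hence $b\in V[G]$ by homogeneity,'' cannot be executed in one step over $V[G]$.

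What actually works, and what the paper does, is more delicate. Take the measure on $\lambda_\xi$ in $V$ itself (no intermediate model is needed, since $\lambda_\xi$ is measurable in the true ground model), let $k:V\to N$ be its ultrapower, and apply $k$ to the \emph{whole} forcing. There is a natural projection $\sigma: k(\MPB_{\bar{E}})\to \MPB_{\bar{E}}$ commuting with the Radin projections $s$ and $k(s)$; choosing $K$ generic for $k(\MPB_{\bar{E}})$ with $\sigma``K=G$ (so also $L=k(s)``K$ projects to $H=s``G$), one lifts $k$ to $k:V[G]\to N[K]$, defined in $V[K]$ --- this projection argument replaces the collapse-absorption you were trying to imitate from Lemma \ref{leshem result}. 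After reducing the defining parameter $z$ to an ordinal below $\lambda_\xi$ (so $k(z)=z$), the branch lies in $\HOD^{N[K]}$, and the descent proceeds in two layers over the Radin generic rather than over $V[G]$: homogeneity of $k(\MPB_{\bar{E}})/L$ (elementarity plus Gitik--Merimovich) combined with homogeneity of $k(\MPB^\pi_{\bar{E}})/H$ (which holds because $k$ fixes $H(\lambda_\xi)$ and the Radin part lives at $\kappa_\xi<\lambda_\xi$) yields that $k(\MPB_{\bar{E}})/H$ is homogeneous, whence $\HOD^{N[K]}\subseteq N[H]\subseteq V[G]$. This two-layer homogeneity bookkeeping over $H$, together with the commuting square of projections, is precisely the ``genuinely new work'' your outline defers to steps (i)--(ii), and it cannot be recovered along the route you propose.
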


\begin{proof}
Assume $\xi < \kappa$ is a limit ordinal. Let $p \in G$ be of the form $p= p_0 ^{\frown} p_1,$ where $p_0 \in \MPB^*_{\bar{e}}$ with $\kappa(\bar{e})=\kappa_\xi$ and $p_1 \in \MPB^*_{\bar{E}}$. So we can factor $\MPB_{\bar{E}}/ p$ as
$$\MPB_{\bar{E}}/p =  \MPB_{\bar{e}}/ p_0 \times  \MPB_{\bar{E}}/ p_1,$$
where $ \MPB_{\bar{e}}/ p_0$ is essentially the forcing up to level
$\kappa_\xi$ below $p_0$ and $\MPB_{\bar{E}}/ p_1$ does not add any new subsets to $\lambda_\xi^+$.
Thus, by Lemma \ref{preservation of DTP}, it suffices to show that $\DTP(\lambda_\xi)$ holds in the generic extension by $ \MPB_{\bar{e}}/ p_0 $.
This follows by essentially the same ideas as in the proof of Main Theorem 3 from \cite{daghighi-pourmahdian} and the weak homogeneity
of the forcing notion $ \MPB_{\bar{e}}/ p_0 $ as proved in \cite{gitik-merimovich}.

The forcing $ \MPB_{\bar{e}}/ p_0$  is just homogeneous modulo Radin forcing, as we describe it below, hence more work is needed to get the result,
and so we sketch the proof for completeness.

Given $p= (f^p, T^p) \in \MPB^*_{\bar{E}}$, set $s(p)=(f^p \upharpoonright \left\{\kappa\right\}, T^p \upharpoonright \left\{ \kappa\right\})$, and by recursion, define the projection
of an arbitrary condition $p= p_0 ^{\frown} \dots ^{\frown} p_n \in \MPB_{\bar{E}}$, by  $s(p)= s(p_0 ^{\frown} \dots p_{n-1})^{\frown} s(p_n).$
Set $\MPB^\pi_{\bar{E}}=s``(\MPB_{\bar{E}})$. Then  $\MPB^\pi_{\bar{E}}$ is  the ordinary Radin forcing using the measure sequence $u= \langle E_\xi(\kappa) \mid  \xi < \lambda \rangle.$
As shown in \cite{gitik-merimovich} (see also \cite{golshani}), the forcing $\MPB_{\bar{E}} / H$ is weakly homogeneous, where $H=s``(G)$ is $\MPB^\pi_{\bar{E}}$-generic over $V$.

In $V$, $\lambda_\xi$ is a measureable cardinal, so let $k: V \to N$ witness this.
It follows  that $k(s)$ is a projection from  $k(\MPB_{\bar{E}})$ onto $k(\MPB^\pi_{\bar{E}})$.

Now let $T \in V[G]$ be a $\lambda_\xi$-tree, which is definable in
$H(\lambda_\xi)^{V[G]}=H(\lambda_\xi)^{N[G]}$. We have a natural projection
\[
\sigma: k(\MPB_{\bar{E}}) \longrightarrow \MPB_{\bar{E}}
\]
which  induces a  projection
\[
\sigma^\pi: k(\MPB^\pi_{\bar{E}}) \longrightarrow \MPB^\pi_{\bar{E}}
\]
so that the following diagram  commutes

\begin{center}
\begin{tikzcd}
k(\MPB_{\bar{E}}) \arrow[rr, "\sigma"] \arrow[dd, "k(s)"']& &\MPB_{\bar{E}} \arrow[dd, "s"]\\
&  & \\
 k(\MPB^\pi_{\bar{E}}) \arrow[rr, "\sigma^\pi"]   & & \MPB^\pi_{\bar{E}}
\end{tikzcd}
\end{center}

\iffalse
\[\begin{array}{ccc}
k(\MPB_{\bar{E}})  & \stackrel{\sigma}{\longrightarrow} & \MPB_{\bar{E}}
 \\ k(s) \downarrow && \downarrow s \\
 k(\MPB^\pi_{\bar{E}}) & \stackrel{\sigma^\pi}{\longrightarrow} & \MPB^\pi_{\bar{E}}
\end{array}\]
\fi
i.e., $\sigma^\pi \circ k(s) = s \circ \sigma.$
Let $K$ be $k(\MPB_{\bar{E}})$-generic over $V$ such that $\sigma``(K)=G.$ Then $L= k(s)``(K)$ is $k(\MPB^\pi_{\bar{E}}) $-generic over $V$
and $\sigma^\pi``(L)=H.$

It is easily seen that we can lift $k$ to some elementary embedding $k: V[G] \to N[K]$, which is defined
in $V[K].$ As in the proof of Lemma \ref{leshem result}, $T$ has a branch $b \in N[K]$ of the form
\[
b= \left\{ x \in k(T) \mid x <_{k(T)} y  \right\}=\left\{ x \in T \mid x <_{k(T)} y  \right\},
\]
where $y \in k(T)_{\lambda_\xi}$ is a node on $\lambda_\xi$-th level of $k(T)$. Assume $\phi$ defines $T$, so that for some $z \in H(\lambda_\xi)^{V[G]}$
\[
\alpha <_T \beta \iff H(\lambda_\xi)^{V[G]} \models \phi(z, \alpha, \beta).
\]
But $H(\lambda_\xi)^{V[G]}=H(\lambda_\xi)[G],$ so for some $\dot{z} \in H(\lambda_\xi), z=\dot{z}[G]$. We can assume from the start that
each element of $H(\lambda_\xi)$ is definable in $H(\lambda_\xi)$ using ordinal parameters less that $\lambda_\xi$, so without loss of generality, assume $z$ is an ordinal
less than $\lambda_\xi$. It follows that $k(z)=z$, and
\[
b=\left\{ x \in T \mid   H(k(\lambda_\xi))^{N[K]} \models \phi(z, x, y)    \right\} \in \HOD^{N[K]}.
\]
Note that $k$ is constant on $H(\lambda_\xi)$ and $\MPB^\pi_{\bar{E}}$ is essentially the Radin forcing at $\kappa_\xi < \lambda_\xi$ using the measure sequence $u$, so we can easily show that $k(\MPB^\pi_{\bar{E}})/H$ is weakly homogeneous. On the other hand, using the elementarity
of $k$ and by \cite{gitik-merimovich}, $k(\MPB_{\bar{E}}) / L$ is also homogeneous.
It easily follows that  $k(\MPB_{\bar{E}}) / H$ is indeed homogeneous, so $\HOD^{N[K]} \subseteq N[H]$  and hence
\[
\HOD^{N[K]} \subseteq N[H] \subseteq V[G].
\]
It follows that $b \in V[G],$ as required.
\end{proof}

From now on, we assume that  $\kappa_0=\aleph_0$ and that each limit point of $C$ is a singular cardinal in $V[G]$.

Now we work in $V[G]$, and let
\[
\MQB = \langle \langle \MQB_\xi \mid \xi \leq \kappa    \rangle, \langle  \dot{\MRB}_\xi \mid \xi < \kappa     \rangle\rangle
\]
 be the reverse Easton iteration of forcing notions where for each $\xi < \kappa,$ $\Vdash_{\MQB_\xi}$``$\dot{\MRB}_\xi=\dot{\C}ol(\kappa^*_\xi, < \kappa_{\xi+1})$'',
where $\kappa^*_\xi=\kappa_\xi^+$ if $\xi>0$ is a limit ordinal and $\kappa^*_\xi=\kappa_\xi$ otherwise. Let $H$ be $\MQB$-generic over $V[G].$

By Lemmas \ref{leshem result} and \ref{preservation of DTP},
\begin{center}
$V[G][H] \models$``$\DTP(\nu^+)$ holds for all regular cardinals $\nu < \kappa$''.
\end{center}

Note that in $V[G][H]$, there are no inaccessible cardinals below $\kappa$ (by our assumption on $C$)
and limit cardinals of $V[G][H]$ below $\kappa$  are of the form $\kappa_\xi,$ for some limit ordinal $\xi < \lambda$.
By Lemma \ref{basic properties}, $(\kappa_\xi^+)^{V[G][H]}=\lambda_\xi$, so the following Lemma completes the proof.

\begin{lemma}
$V[G][H] \models$``$\DTP(\lambda_\xi)$ holds for all limit ordinals $\xi < \kappa$''.
\end{lemma}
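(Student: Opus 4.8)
The plan is to peel off the tail of the iteration $\MQB$ using Lemma \ref{preservation of DTP}, thereby reducing the statement to a single intermediate model, and then to rerun the branch-capturing argument from the previous lemma inside that model. Fix a limit ordinal $\xi < \kappa$; recall that $\lambda_\xi=(\kappa_\xi^+)^{V[G]}$ and set $H_\xi = H \upharpoonright \xi$, the generic for the initial segment $\MQB_\xi$ of the iteration below stage $\xi$.

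First I would factor $\MQB = \MQB_\xi * \dot{\MQB}^{[\xi,\kappa)}$, where $\dot{\MQB}^{[\xi,\kappa)}$ is the remainder of the iteration beginning with $\dot{\MRB}_\xi = \Col(\lambda_\xi, <\kappa_{\xi+1})$. Since $\MQB_\xi$ has size $\kappa_\xi < \lambda_\xi$ it is $\lambda_\xi$-c.c., so $\lambda_\xi$ remains $\kappa_\xi^+$ in $V[G][H_\xi]$. Over $V[G][H_\xi]$ every factor of $\dot{\MQB}^{[\xi,\kappa)}$ from stage $\xi$ on is $<\lambda_\xi$-closed, so (with the reverse Easton supports) the whole tail is $<\lambda_\xi$-closed; hence it adds no new bounded subsets of $\lambda_\xi$ and preserves $H(\lambda_\xi)$, and it is homogeneous, being a reverse Easton iteration of Levy collapses. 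By Lemma \ref{preservation of DTP} it therefore suffices to establish $\DTP(\lambda_\xi)$ in $V[G][H_\xi]$.

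Because $\MQB_\xi$ genuinely changes $H(\lambda_\xi)$, I cannot transfer $\DTP(\lambda_\xi)$ from $V[G]$ by preservation; instead I would reprove it in $V[G][H_\xi]$ by lifting the measurable embedding through the extra collapse. Let $k: V \to N$ witness the measurability of $\lambda_\xi$ in $V$, with $\crit(k)=\lambda_\xi$, and lift it to $k : V[G] \to N[K]$ exactly as in the previous lemma, via the projection $\sigma : k(\MPB_{\bar{E}}) \to \MPB_{\bar{E}}$ and a $k(\MPB_{\bar{E}})$-generic $K$ with $\sigma``K=G$. Since $\MQB_\xi \in H(\lambda_\xi)^{V[G]}$ has size $<\lambda_\xi=\crit(k)$ and $k$ fixes $H(\lambda_\xi)^{V[G]}$ pointwise, we get $k(\MQB_\xi)=\MQB_\xi$ and $H_\xi$ stays generic over $N[K]$, so $k$ lifts to an elementary $k : V[G][H_\xi] \to N[K][H_\xi]$. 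Given a $\lambda_\xi$-tree $T \in V[G][H_\xi]$ defined over $H(\lambda_\xi)^{V[G][H_\xi]}$ by a formula $\phi$ and (as before) an ordinal parameter $z<\lambda_\xi$ fixed by $k$, and a node $y$ on the $\lambda_\xi$-th level of $k(T)$, the branch $b = \{x \in T : H(k(\lambda_\xi))^{N[K][H_\xi]} \models \phi(z,x,y)\}$ lies in $\HOD^{N[K][H_\xi]}$.

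The hard part will be showing $\HOD^{N[K][H_\xi]} \subseteq V[G][H_\xi]$, which is the step that forces $b$ into the desired model. The idea is that the quotient $k(\MPB_{\bar{E}})/(s``G)$ is homogeneous modulo Radin by \cite{gitik-merimovich}, just as in the previous lemma, and that the collapse $\MQB_\xi$ is itself homogeneous and lies in the corresponding inner model $N[s``G]$, so that the combined quotient remains homogeneous over $N[s``G][H_\xi]$; this would give $\HOD^{N[K][H_\xi]} \subseteq N[s``G][H_\xi] \subseteq V[G][H_\xi]$ and hence $b \in V[G][H_\xi]$. The main obstacle is precisely this bookkeeping: checking that $\MQB_\xi$ and the Radin club below $\kappa_\xi$ lie in the appropriate inner model and that the homogeneity of the extender-based forcing combines correctly with that of the Levy collapses. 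The cardinal-preservation and closure claims of the reduction, together with the genericity of $H_\xi$ over $N[K]$, are routine but must be verified against the precise definition of $\MPB_{\bar{E}}$ in \cite{merimovich}.
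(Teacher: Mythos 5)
Your overall plan coincides with the paper's: first chop off the tail of $\MQB$ with Lemma \ref{preservation of DTP}, then rerun the branch-capturing argument of the preceding lemma with the collapse iteration absorbed into the lifted embedding. But there is a genuine gap at exactly the step you yourself single out as the hard part. You assert that $\MQB_\xi$ ``lies in the corresponding inner model $N[s``G]$'', so that the combined quotient is homogeneous over $N[s``G][H_\xi]$. This is false: $\MQB_\xi$ is computed in $V[G]$ (its stage $\eta$ is $\Col(\kappa^*_\eta,<\kappa_{\eta+1})$ as computed in $V[G][H_\eta]$), and the extender-based part of $\MPB_{\bar{E}}$ below $\kappa_\xi$ adds many bounded subsets of $\kappa_\xi$ --- the Cohen-like functions --- which do not belong to the pure Radin extension $N[s``G]$. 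Hence the conditions of these collapses, and so $\MQB_\xi$ itself, are not members of $N[s``G]$; $N[s``G][H_\xi]$ is not a $\MQB_\xi$-generic extension of $N[s``G]$, and the two-step homogeneity argument as you state it does not parse. The repair is not to place $\MQB_\xi$ in the Radin extension but to treat $\dot{\MQB}_\xi$ as a name \emph{definable over the extender-based forcing}: the canonical name of the collapse iteration is fixed by every automorphism, so the modulo-Radin automorphisms of \cite{gitik-merimovich} extend to the iteration with the collapse attached, and combined with the weak homogeneity of the collapses themselves this puts the ordinal-definable sets of the final model into the Radin extension $N[H]$ (with $H=s``G$), exactly as in the previous lemma --- whence $b\in N[H]\subseteq V[G]\subseteq V[G][H_\xi]$.

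Relatedly, you skip the one substantive reduction the paper performs: factoring $\MPB_{\bar{E}}/p=\MPB_{\bar{e}}/p_0\times\MPB_{\bar{E}}/p_1$ at $\kappa_\xi$, where the upper factor adds no new subsets of $\lambda_\xi^+$, so that $\MQB_\xi$ is computed identically in $V^{\MPB_{\bar{E}}/p}$ and $V^{\MPB_{\bar{e}}/p_0}$ and the problem reduces to the small iteration $\MPB_{\bar{e}}/p_0 * \dot{\MQB}_\xi$. In your global version this factoring is not dispensable bookkeeping: it is what secures the agreement $H(\lambda_\xi^+)^{N[K]}=H(\lambda_\xi^+)^{V[G]}$ (via ${}^{\lambda_\xi}N\subseteq N$, the identity of the lower factors under $\sigma$, and the fact that both upper factors add no subsets of $\lambda_\xi^+$), and that agreement is in turn what your two key claims rest on: that $k(\MQB_\xi)=\MQB_\xi$, and that $H_\xi$ remains generic over $N[K]$ (every maximal antichain of $\MQB_\xi$ in $N[K]$ must be seen to lie in this common $H(\lambda_\xi^+)$, since $N[K]\not\subseteq V[G]$). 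Note also that your cardinality estimate is off: $\kappa_\xi$ is singular in $V[G]$, so the limit at stage $\xi$ is inverse and $|\MQB_\xi|$ is in general $\kappa_\xi^{\cf(\xi)}=\lambda_\xi$ rather than $\kappa_\xi$; consequently $\MQB_\xi$ is only $\lambda_\xi^+$-c.c., and both the preservation of $\lambda_\xi=\kappa_\xi^+$ and the identity $k(\MQB_\xi)=\MQB_\xi$ must instead be run from $\MQB_\xi\subseteq H(\lambda_\xi)^{V[G]}$ together with the $H(\lambda_\xi^+)$-agreement above. With the factoring done first and the automorphism-extension observation in place, your argument then matches the paper's.
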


\begin{proof}
Let $\MQB=\MQB_\xi * \dot{\MQB}_\infty$, where $\MQB_\xi$ is the iteration up to level $\xi$
and $\Vdash_{\MQB_\xi}$``$\dot{\MQB}_\infty$ is $\lambda_\xi$-closed and homogeneous''.
So by Lemma \ref{preservation of DTP}, it suffices to show that
$\DTP(\lambda_\xi)$ holds in the extension by $\MPB_{\bar{E}}* \dot{\MQB}_\xi$.  As before, let $p \in G$ be of the form $p= p_0 ^{\frown} p_1,$ where $p_0 \in \MPB^*_{\bar{e}}$ with $\kappa(\bar{e})=\kappa_\xi$ and $p_1 \in \MPB^*_{\bar{E}}$ and  factor  $\MPB_{\bar{E}}/ p$ as $\MPB_{\bar{E}}/p =  \MPB_{\bar{e}}/ p_0 \times  \MPB_{\bar{E}}/ p_1.$

As $\MPB_{\bar{E}}/ p_1$ does not add new subsets to $\lambda_\xi^+$, the forcing notion $\MQB_\xi$ is computed in both models $V^{\MPB_{\bar{E}}/p}$
and $V^{\MPB_{\bar{e}}/p_0}$ in the same way,
so it suffices to prove the following:
\[
V^{\MPB_{\bar{e}}/p_0 * \dot{\MQB}_\xi} \models \text{~``~} \DTP(\lambda_\xi) \text{~holds''~}.
\]
The proof is essentially the same as before using the weak homogeneity of the corresponding forcing notions.
\end{proof}

\end{document}